\newtheorem{theorem}{Theorem}
\theoremstyle{definition}
\newtheorem{example}{Example}
\newcommand\bbF{\mathbb{F}}
\newcommand\bfF{\mathbf{F}}
\newcommand \fG{\mathfrak G}
\newcommand\cH{\mathcal{H}}
\newcommand\bw{\bar{w}}
\newcommand\pdot{{\cdot}}
\newcommand\eset{\varnothing}
\newcommand\full{^{(V)}}
\newcommand\setm{\smallsetminus}
\newcommand\inv{^{-1}}
\newcommand\rk{\operatorname{rk}}
\newcommand\bs{\bar{s}}
\newcommand\triv{\mathbf{1}}
\newcommand\case[1]{\textit{Case }#1.}
\begin{document}

\title[Whitney Numbers of Partial Dowling Lattices]{Whitney Numbers of Partial Dowling Lattices}
\author{Thomas Zaslavsky}
\address{Department of Mathematics and Statistics, Binghamton University (SUNY),  Binghamton, NY 13902-6000, U.S.A.}
\email{zaslav@math.binghamton.edu}
\date{\today}

\begin{abstract}
The Dowling lattice $Q_n(\fG)$, $\fG$ a finite group, generalizes the geometric lattice generated by all vectors, over a field, with at most two nonzero components.  Abstractly, it is a fundamental object in the classification of finite matroids.  Constructively, it is the frame matroid of a certain gain graph known as $\fG\pdot K_n^{(V)}$.  Its Whitney numbers of the first kind enter into several important formulas.  Ravagnani suggested and partially proved that these numbers of $Q_n(\fG)$ and higher-weight generalizations are polynomial functions of $|\fG|$.  We give a simple proof for $Q_n(\fG)$ and its generalization to a wider class of gain graphs and biased graphs, and we determine the degrees and coefficients of the polynomials.
\end{abstract}

\keywords{Matroid, Whitney number, gain graph, Dowling lattice, group expansion gain graph}
\subjclass[2010]{Primary 05B35; Secondary 05C22, 05C31}

\maketitle

The famous Dowling lattices $Q_n(\fG)$ of a group $\fG$ \cite{CGL}, or more precisely their matroids, are one of the two fundamental types of matroid in the classification of Kahn and Kung \cite{VCG}.  They generalize to arbitrary groups the matroid of all vectors with at most two nonzero components in the vector space $F^n$, $F$ a field.  
We show that their Whitney numbers of the first kind, which are the coefficients of their characteristic polynomials and consequently of some importance, are themselves polynomials in the order of the group.  This is obvious from Dowling's own formulas, but we notice that this property generalizes considerably, to what we might call partial Dowling lattices.

My attention was drawn to this property by Ravagani's study \cite{Rav} of the same property of the higher-weight Dowling lattices, which he calls $\cH(n,q,d)$.  Such a lattice is the lattice of flats of the matroid of all vectors in $\bbF_q^n$ with at most $d$ nonzero coordinates.  The examples of weight $d=1$ gives a free matroid; but those of weight $d=2$, which are the Dowling lattices $Q_n(\bbF_q^\times)$, are more complicated; and the higher-weight Dowling lattices have resisted analysis, in particular of their important characteristic polynomials which have implications for linear coding theory.  Ravagnani suggested that the coefficients of these polynomials, which were dubbed Whitney numbers of the first kind by G.-C.\ Rota, are polynomial functions of $q$ and obtained partial results.  Stimulated by this work I looked at a different generalization, still in the realm of weight 2, and obtained conclusions that are fundamentally simple but with some complicated details.

\section{Gains and Dowlings}

We begin by introducing Dowling's lattices $Q_n(\fG)$ of a group \cite{CGL} by way of their matroids.  

A \emph{gain graph} $\Phi=(\Gamma_\Phi,\phi)$ consists of a graph $\Gamma_\Phi = (V,E)$ and a \emph{gain mapping} $\phi$ from oriented edges to a group $\fG$.  That is, the gain $\phi(e)$ depends on the orientation of the edge so that reversing the orientation inverts the gain, which we express in a formula as $\phi(e\inv) = \phi(e)\inv$, where $e$ is an oriented edge and $e\inv$ denotes the opposite orientation.  A particular kind of gain graph is the $\fG$-expansion of a graph $\Gamma$, written $\fG\pdot\Gamma$, whose vertex set is $V=V(\Gamma)$ and which has an edge $ge$ with every possible gain $g\in\fG$ for each edge $e\in E(\Gamma)$.  A \emph{half edge} is an edge with only one endpoint; we may add to the $\fG$-expansion a half edge at each vertex of a subset $X \subseteq V$, giving the \emph{partially filled} $\fG$-expansion $\fG\pdot\Gamma^{(X)}$.  In particular, when $X=V$ we call the result the \emph{full} $\fG$-expansion of $\Gamma$.

The associated \emph{frame matroid} $\bfF(\Phi)$ of a gain graph $\Phi$ (see \cite[Section 2]{BG2}) is defined on the edge set $E(\Phi)$ with circuits of three kinds.  The gain of a circle (i.e., a ``cycle'' or ``polygon'') $C = e_1\cdots e_l$ is $\phi(C) = \phi(e_1)\cdots\phi(e_l)$; this is sufficiently well defined because we only care whether the gain is the neutral element or not.  If the gain is the neutral element, $C$ is called \emph{neutral}.  The three kinds of circuit are these:
\begin{enumerate}[\quad C1.]
\item A neutral circle.
\item Two disjoint non-neutral circles together with a minimal connecting path, or two distinct non-neutral circles with exactly one common vertex.  Either or both circles may be replaced by a half edge in this type of circuit.
\item A theta subgraph whose three circles are not neutral.
\end{enumerate}
A subgraph or edge set is \emph{balanced} if every circle is neutral and there are no half edges.  The rank of an edge set $S$ is 
$$\rk(S) = |V| - b(S),$$
where $b(S)$ denotes the number of balanced components of the spanning subgraph $(V,S)$.  An edge set is \emph{closed}, or a \emph{flat} of $\bfF(\Phi)$, if it is maximal with its rank, i.e., adding any edge increases the rank.

The usual cycle matroid of $\Gamma$ is the frame matroid of the gain graph consisting of the graph $\Gamma$ with identity gains from the trivial group.  We write $\bfF(\Gamma)$ for this matroid.

The \emph{Dowling matroid} of rank $n$ of a group $\fG$ is $\bfF(\fG K_n\full)$ and the \emph{Dowling lattice} $Q_n(\fG)$ is the lattice of flats of this matroid \cite[Examples 3.6, 5.6]{BG3}.  Dowling originally considered $Q_n(\fG)$ as the lattice of the matroid formed by the weight 1 and 2 vectors in $\bbF_q^n$, that is, $\cH(n,q,2)$, in which context the group $\fG$ is the multiplicative group $\bbF_q^\times$ of order $q-1$.  (Both $q$ and $q-1$ appear naturally in Ravagnani's formulas for Whitney numbers of higher-weight Dowling lattices.  Does this suggest that both the additive and multiplicative groups of the field play a role in determining the matroid?)

\section{Chromatics and characteristics}

A gain graph has a chromatic polynomial $\chi_\Phi(\lambda)$ (see \cite[Section 3]{BG3}), one of whose definitions is
$$
\chi_\Phi(\lambda) := \sum_{S \subseteq E} (-1)^{|S|} \lambda^{b(S)}.
$$
We state the essential formulas.  We assume $\Phi$ has order $n$ and write $\gamma = |\fG|$.  The chromatic polynomial is monic of degree $n$.  If $b(\Phi)=0$ (as is the case for group expansions when $\gamma>1$ or $X=V$), the chromatic polynomial equals the characteristic polynomial $p_{\bfF(\Phi)}(\lambda)$ of the matroid; in general, 
\begin{equation*}\label{schar}
\chi_\Phi(\lambda) = \lambda^{b(\Phi)} p_{\bfF(\Phi)}(\lambda)
\end{equation*}
by \cite[Theorem 5.1]{BG3}.  Thus, the two polynomials have the same coefficients aside from a possible displacement.
From the definition, obviously 
\begin{equation}\label{noedges}
\chi_\Phi(\lambda) = \lambda^n \text{ if } E(\Phi)=\eset.
\end{equation}
A useful chromatic formula is the product rule,
\begin{equation}\label{sproduct}
\chi_{\Phi_1 \cup \Phi_2}(\lambda) = \chi_{\Phi_1}(\lambda)\chi_{\Phi_2}(\lambda)
\end{equation}
if $\Phi_1$ and $\Phi_2$ are disjoint gain graphs.  The usual chromatic polynomial of a graph is the chromatic polynomial of the trivial group expansion: $\chi_\Gamma(\lambda) = \chi_{\triv\pdot\Gamma}(\lambda)$, where $\triv$ denotes the trivial group.

We need the basic formula for chromatic polynomials of full group expansions, which is a reduction to ordinary graphs \cite[Examples 3.6 and 6.6]{BG3}:
\begin{equation}\label{sfull}
\chi_{\fG\pdot\Gamma\full}(\lambda) = \gamma^n \chi_\Gamma\Big(\frac{\lambda-1}{\gamma}\Big).
\end{equation}
We note that this polynomial is independent of the group structure.  One of Dowling's main discoveries was (in slightly different language) this simple formula for $\Gamma=K_n$, in contrast to the generally hard problem of computing characteristic polynomials of matroids.  

Formula \eqref{sfull} and the fact that $\chi_{\fG\pdot\Gamma\full}(\lambda)$ is a polynomial of degree $n$ clearly imply that the coefficients are polynomials in $\gamma$.  We want more detailed information.

\section{Whitneys}

The characteristic polynomial of a matroid $M$ has the form
$$
p_M(\lambda) = \sum_{i=0}^{\rk M} w_i(M) \lambda^{\rk M - i},
$$
the coefficients $w_i(M)$ being the \emph{Whitney numbers of $M$ of the first kind}.  The chromatic polynomial of $\Phi$ therefore has the form
$$
\chi_\Phi(\lambda) = \sum_{i=0}^{n} w_i(\Phi) \lambda^{n - i},
$$
where $w_i(\Phi)$ is defined as $w_i(\bfF(\Phi))$; similarly, $w_i(\Gamma)$ is defined as $w_i(\bfF(\Gamma))$.  The Whitney numbers of a matroid are nonzero, alternate in sign, and begin with $w_0=1$ and $-w_1 = |E(M)|$ (the number of atoms of $M$) \cite{FCT}, so the same is true for those of a gain graph; but note that $w_i(\Phi)=0$ for $i>\rk\bfF(\Phi)=n-b(\Phi)$.
Similarly, the Whitney numbers $w_i(\Gamma)$ are nonzero for $0 \leq i \leq n-c$ and 0 otherwise, where $c$ denotes the number of components of $\Gamma$ and $n-c=\rk\bfF(\Gamma)$.

We simplify our work by changing to the \emph{signless chromatic polynomial}, $\bar\chi_\Phi(\lambda) := (-1)^n \chi_\Phi(-\lambda)$, and the \emph{signless Whitney numbers}, which are $\bw_i = (-1)^iw_i = |w_i|$ by the alternating sign property.  Thus,
 $$
\bar\chi_\Phi(\lambda) = \sum_{i=0}^{n-b(\Phi)} \bw_i(\Phi) \lambda^{n - i}.
$$
Equation \eqref{sfull} becomes
\begin{equation}\label{full}
\bar\chi_{\fG\pdot\Gamma\full}(\lambda) = \gamma^n \bar\chi_\Gamma\Big(\frac{\lambda+1}{\gamma}\Big).
\end{equation}
For brevity we say ``Whitney number'' for both signed and signless Whitney numbers; the notation will show which is meant.

In group expansions we have, for example,
$
\bw_0(\fG\pdot\Gamma^{(X)}) = 1$ and $\bw_1(\fG\pdot\Gamma^{(X)}) = |E(\Gamma)| \gamma + |X|,
$
which obviously are polynomial functions of $\gamma$.  
An exercise for the reader is to compute $\bw_2$ before reading Formula \eqref{wformula}.

\begin{theorem}\label{Tfull} 
Given a simple graph $\Gamma$ of order $n>0$, a finite group $\fG$ of order $\gamma>0$, and an integer $i = 0,1,\dots,n$, the signless Whitney number $\bw_i(\fG\pdot\Gamma\full)$ is a polynomial function of $\gamma$ of degree $\min(i,n-c)$, with positive coefficients, i.e.,
\begin{equation}\label{wformula}
\bw_i(\fG\pdot\Gamma\full) = \sum_{0\leq j \leq i}  \bw_j(\Gamma) \binom{n-j}{i-j}  \gamma^j.
\end{equation}
\end{theorem}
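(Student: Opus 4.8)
The plan is to obtain \eqref{wformula} by a direct coefficient comparison in the reduction formula \eqref{full}, after which the degree and positivity assertions follow by inspection. First I would write $\bar\chi_\Gamma(\mu) = \sum_{j=0}^{n} \bw_j(\Gamma)\,\mu^{\,n-j}$, where $\bw_0(\Gamma)=1$ and $\bw_j(\Gamma)=0$ for $j>n-c$, and substitute $\mu = (\lambda+1)/\gamma$ into \eqref{full}. The powers of $\gamma$ combine cleanly, since $\gamma^{n}\cdot\gamma^{-(n-j)} = \gamma^{j}$, giving
\begin{equation*}
\bar\chi_{\fG\pdot\Gamma\full}(\lambda) \;=\; \sum_{j=0}^{n} \bw_j(\Gamma)\,\gamma^{j}\,(\lambda+1)^{\,n-j}.
\end{equation*}
Expanding each $(\lambda+1)^{\,n-j}$ by the binomial theorem and extracting the coefficient of $\lambda^{\,n-i}$ forces the binomial index to equal $n-i$, which in turn requires $j\le i$ (so that $n-i\le n-j$); the resulting coefficient is $\sum_{0\le j\le i}\bw_j(\Gamma)\binom{n-j}{n-i}\gamma^{j}$, and $\binom{n-j}{n-i}=\binom{n-j}{i-j}$ gives exactly \eqref{wformula}.

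Next I would read the two structural claims off this explicit expression. The coefficient of $\gamma^{j}$ in $\bw_i(\fG\pdot\Gamma\full)$ is $\bw_j(\Gamma)\binom{n-j}{i-j}$. For $0\le j\le i\le n$ the binomial coefficient $\binom{n-j}{i-j}$ is a positive integer (since $0\le i-j\le n-j$), and by the sign and support properties of graphic Whitney numbers cited before the theorem, $\bw_j(\Gamma)$ is strictly positive precisely when $0\le j\le n-c$ and is zero otherwise. Hence every term with $0\le j\le\min(i,n-c)$ contributes a strictly positive coefficient while all terms with $j>n-c$ vanish, so $\bw_i(\fG\pdot\Gamma\full)$ is a polynomial in $\gamma$ with positive coefficients whose degree is exactly $\min(i,n-c)$ --- the leading coefficient being $\bw_i(\Gamma)$ when $i\le n-c$ and $\bw_{n-c}(\Gamma)\binom{c}{\,i-n+c\,}$ when $i>n-c$, each nonzero in the stated range of $i$. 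As a check I would confirm that the formula reproduces $\bw_0=1$ and, with $|X|=n$, the value $\bw_1(\fG\pdot\Gamma\full)=|E(\Gamma)|\gamma+n$ recorded before the theorem.

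I do not anticipate a substantive obstacle: the whole argument is the algebraic identity \eqref{full} combined with the binomial theorem. The only point demanding care is the bookkeeping for the degree statement --- one must use the vanishing of $\bw_j(\Gamma)$ for $j>n-c$ (not merely the range $0\le j\le i$) to see that the degree is $\min(i,n-c)$ rather than simply $i$, and one must check that the putative leading coefficient does not itself vanish, which reduces to the elementary facts that $\binom{n-j}{i-j}\ne 0$ for $0\le j\le i\le n$ and that $\binom{c}{i-n+c}\ne 0$ exactly when $n-c\le i\le n$.
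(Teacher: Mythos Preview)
Your proposal is correct and follows essentially the same route as the paper's proof: substitute $\mu=(\lambda+1)/\gamma$ into \eqref{full}, expand $(\lambda+1)^{n-j}$ by the binomial theorem, and read off the coefficient of $\lambda^{n-i}$ to obtain \eqref{wformula}, then use the support of $\bw_j(\Gamma)$ to get the degree and positivity. Your write-up is slightly more explicit about why the leading coefficient is nonzero, but the argument is the same.
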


\begin{proof}
The proof from Equation \eqref{full} is easy.
\begin{align*}
\bar\chi_{\fG\pdot\Gamma\full}(\lambda) &= \gamma^n \bar\chi_\Gamma\Big(\frac{\lambda+1}{\gamma}\Big)
= \sum_j \bw_j(\Gamma) \gamma^j (\lambda+1)^{n-j}
\\&
= \sum_j \bw_j(\Gamma) \gamma^{j} \sum_k \binom{n-j}{k} \lambda^k
\\&
= {\sum\sum}_{0\leq k \leq n-j\leq n} \lambda^k \bw_j(\Gamma) \gamma^{j} \sum_k \binom{n-j}{k} 
\end{align*}
and substituting $k=n-i$,
\begin{equation}\label{chiformula}
\bar\chi_{\fG\pdot\Gamma\full}(\lambda) = \sum_{0 \leq i \leq n} \lambda^{n-i} \sum_{0\leq j \leq i} \bw_j(\Gamma) \gamma^{j} \binom{n-j}{i-j} ,
\end{equation}
which implies \eqref{wformula}.
Thus, the Whitney numbers are polynomials in $\gamma$ and, as promised, all coefficients of powers $\gamma^j$ are positive when $j\leq i$ and $\bw_j(\Gamma)\neq0$, i.e., $j\leq n-c$.  Thus, the degree is $\min(i,n-c)$.
\end{proof}

When $i \leq n-c$, the leading coefficient in \eqref{wformula} is $\bw_i(\Gamma)$, which is the number of no-broken-circuit sets of $i$ edges in $\Gamma$ \cite{WhLogical}.  
When $i > n-c$, the leading coefficient is that of $\gamma^{n-c}$, which is $\bw_{n-c}(\Gamma)\binom{c}{n-i}$.  The first factor is the number of maximal forests that contain no broken circuit.  The second factor is the number of ways to choose $n-i$ of the $c$ components of the forest.  As a combinatorial interpretation of the product this seems arbitrary.
Are there meaningful combinatorial interpretations of the coefficients of powers of $\gamma$?

\section{More expansions}

We generalize to group expansions $\fG\pdot\Gamma^{(X)}$ with $X \subseteq V$, i.e., for any set of half edges.  (These were studied in \cite[Examples 3.7 and 6.7]{BG3}.)
We say a vertex set is \emph{stable} if it contains no edges in $\Gamma$.  The number of stable sets of order $k$ is $\alpha_k(\Gamma)$.  
The complement of $X \subseteq V$ is $X^c$.  The number of isolated vertices of $\Gamma$ that are not in $X$ is $\zeta$.

\begin{theorem}\label{Tnotfull} 
For any set $X \subseteq V(\Gamma)$, the Whitney number $\bw_i(\fG\pdot\Gamma^{(X)})$ is a polynomial function of $\gamma$ of degree $\min(i,n-c)$ for each $i=0,1,\ldots,n-\zeta$.  Specifically,
\begin{equation}\label{whits}
\bw_i(\fG\pdot\Gamma^{(X)}) = \sum_{j=0}^{i} \gamma^j \sum_{k=0}^{i-j} (-1)^k \binom{n-j-k}{n-i} \sum_{\substack{Y \subseteq X^c:\, \text{stable}\\ |Y|=k}} \bw_j(\Gamma\setm Y)
\end{equation}  
if $i \leq n-\zeta$ and $\bw_i(\fG\pdot\Gamma^{(X)}) = 0$ if $i > n-\zeta$.  The leading term is $\bw_i(\Gamma) \gamma^i$ if $i\leq n-c$ and $\bw_{n-c}(\Gamma) \binom{c-\zeta}{i-(n-c)} \gamma^{n-c}$ if $n-c \leq i \leq n-\zeta$.
\end{theorem}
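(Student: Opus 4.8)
The plan is to reduce $\fG\pdot\Gamma^{(X)}$ to the full expansions of the induced subgraphs $\Gamma\setm Y$ and apply Theorem~\ref{Tfull} to each. The bridge is the coloring interpretation of the gain-graphic chromatic polynomial \cite[Section~3]{BG3}: for a nonnegative integer $k$, $\chi_\Phi(1+k\gamma)$ counts the proper colorings of $\Phi$ by the $\fG$-set $\{0\}\cup(\fG\times\{1,\dots,k\})$, on which $\fG$ acts by left multiplication in the first coordinate and fixes $0$, an ordinary edge being violated exactly as usual and a half edge at $v$ being violated exactly when $v$ receives the color $0$. Using this I would first establish the decomposition
\[
\bar\chi_{\fG\pdot\Gamma^{(X)}}(\lambda)=\sum_{\substack{Y\subseteq X^c\\ Y\text{ stable in }\Gamma}}(-1)^{|Y|}\,\bar\chi_{\fG\pdot(\Gamma\setm Y)\full}(\lambda),
\]
equivalently, after absorbing the sign that comes from the order shift in $\bar\chi_\Phi(\lambda)=(-1)^{|V(\Phi)|}\chi_\Phi(-\lambda)$, the plain identity $\chi_{\fG\pdot\Gamma^{(X)}}(\lambda)=\sum_Y\chi_{\fG\pdot(\Gamma\setm Y)\full}(\lambda)$.

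To prove the decomposition one fixes $k$ and sorts the proper colorings $c$ of $\fG\pdot\Gamma^{(X)}$ by the set $Y:=c\inv(0)$ of vertices colored $0$. A half edge at a vertex of $X$ colored $0$ would be violated, so $Y\subseteq X^c$; an edge of $\Gamma$ joining two $0$-colored vertices would be violated because $g\pdot0=0$ for every $g$, so $Y$ is stable in $\Gamma$; and on $V\setm Y$ the coloring takes all its values in $\fG\times\{1,\dots,k\}$, so it restricts to a proper coloring of the full expansion $\fG\pdot(\Gamma\setm Y)\full$, while every such restriction extends back uniquely. Hence the two counting functions agree at $\lambda=1+k\gamma$ for all $k$, and since both sides are polynomials in $\lambda$ they agree identically.

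Next, apply Theorem~\ref{Tfull} to each $\fG\pdot(\Gamma\setm Y)\full$; since its order is $n-|Y|$, the coefficient of $\lambda^{n-i}$ in $\bar\chi_{\fG\pdot(\Gamma\setm Y)\full}$ is $\bw_{i-|Y|}(\fG\pdot(\Gamma\setm Y)\full)=\sum_j\bw_j(\Gamma\setm Y)\binom{(n-|Y|)-j}{(i-|Y|)-j}\gamma^j$. Reading the coefficient of $\lambda^{n-i}$ off the displayed decomposition, rewriting $\binom{(n-|Y|)-j}{(i-|Y|)-j}=\binom{n-j-|Y|}{n-i}$, and grouping the terms by $j$ and by $k=|Y|$ gives \eqref{whits}; this is routine (each $\Gamma\setm Y$ is again simple, and $\Gamma\setm Y=\eset$ is harmless). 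For the degree, the $\gamma$-power of the $(j,k,Y)$-term of \eqref{whits} is $j$, one has $\binom{n-j-k}{n-i}=0$ unless $j+k\le i$, and $\bw_j(\Gamma\setm Y)=0$ unless $j\le\rk\bfF(\Gamma\setm Y)\le n-c$ (a maximal forest of $\Gamma\setm Y$ extends to one of $\Gamma$); hence the $\gamma$-degree is at most $\min(i,n-c)$. If $i\le n-c$, the only term with $\gamma^i$ has $k=0$, hence $Y=\eset$, and it equals $\bw_i(\Gamma)\gamma^i\ne0$, which gives degree $i$ and the stated leading term.

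It remains to treat $n-c<i\le n-\zeta$. A term feeds $\gamma^{n-c}$ only when $j=n-c$, which forces $\rk\bfF(\Gamma\setm Y)=n-c$, hence $c(\Gamma\setm Y)=c-|Y|$; since deleting a vertex lowers the component count only when that vertex is isolated, and $Y$ is stable, this forces $Y$ to be a set of isolated vertices of $\Gamma$, of which exactly $\zeta$ lie in $X^c$, and then $\bw_{n-c}(\Gamma\setm Y)=\bw_{n-c}(\Gamma)$ because deleting isolated vertices leaves $\bfF$ unchanged. Thus the $\gamma^{n-c}$-coefficient is $\bw_{n-c}(\Gamma)\sum_k(-1)^k\binom{\zeta}{k}\binom{c-k}{n-i}$, and the binomial identity $\sum_k(-1)^k\binom{m}{k}\binom{a-k}{b}=\binom{a-m}{b-m}$ collapses it to $\bw_{n-c}(\Gamma)\binom{c-\zeta}{i-(n-c)}$, which is nonzero exactly for $n-c\le i\le n-\zeta$; this yields degree $n-c$ and the stated leading term. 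Finally $\bw_i(\fG\pdot\Gamma^{(X)})=0$ for $i>n-\zeta$ because the $\zeta$ isolated vertices of $\Gamma$ outside $X$ remain isolated, hence balanced, components of $\fG\pdot\Gamma^{(X)}$, so $b(\fG\pdot\Gamma^{(X)})\ge\zeta$ and $\rk\bfF(\fG\pdot\Gamma^{(X)})=n-b\le n-\zeta$. The one genuinely delicate point is this last part: pinning down precisely which stable sets $Y$ feed the highest power of $\gamma$ when $i>n-c$, and checking that the surviving alternating sum of products of binomials collapses cleanly, with no accidental cancellation, to $\binom{c-\zeta}{i-(n-c)}$; the decomposition and the coefficient extraction are straightforward once the coloring dictionary is in place.
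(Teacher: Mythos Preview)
Your proof is correct and follows essentially the same route as the paper: the same stable-set decomposition of $\bar\chi_{\fG\pdot\Gamma^{(X)}}$ into full-expansion polynomials, extraction of coefficients via Theorem~\ref{Tfull}, and the same analysis of the $\gamma^{n-c}$ term via the observation that only $Y\subseteq Z$ survives, collapsed by the same binomial identity. The only substantive difference is that you supply a direct coloring proof of the decomposition (sorting proper colorings by the zero set), whereas the paper simply invokes \cite[Theorem~6.1]{BG3}; and you deduce $\bw_i=0$ for $i>n-\zeta$ from $b(\fG\pdot\Gamma^{(X)})\ge\zeta$ rather than from the product rule~\eqref{sproduct}.
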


\begin{proof}
The crucial formula is 
\begin{align*}\label{expansion}
\bar\chi_{\fG\pdot\Gamma^{(X)}}(\lambda) 
= \sum_{Y \subseteq X^c: \text{ stable}} (-1)^{|Y|} \bar\chi_{\fG\pdot\Gamma\full\setm Y}(\lambda),
\end{align*}
from \cite[Theorem 6.1]{BG3}.
We extract the Whitney numbers via Equation \eqref{chiformula}:
\begin{align*}
\sum_{\substack{Y \subseteq X^c\\ \text{stable}}} (-1)^{|Y|} \bar\chi_{\fG\pdot\Gamma\full\setm Y}(\lambda)
&= \sum_{\substack{Y \subseteq X^c\\ \text{stable}}} (-1)^{|Y|} \sum_{k=0}^{n-|Y|}  \bw_k(\fG\pdot\Gamma\full\setm Y) \lambda^{n-|Y|-k}
\\&
= \sum_{\substack{Y \subseteq X^c\\ \text{stable}}} (-1)^{|Y|} \sum_{k=0}^{n-|Y|}  \sum_{j=0}^k  \bw_j(\Gamma\setm Y) \binom{n-|Y|-j}{k-j}  \gamma^j \lambda^{n-|Y|-k}
\intertext{and now substituting $i=|Y|+k$,} 
&= \sum_{i=0}^n \lambda^{n-i}  \sum_{\substack{Y \subseteq X^c\\ \text{stable}\\ |Y|\leq i}} (-1)^{|Y|} \sum_{j=0}^{i-|Y|} \bw_j(\Gamma\setm Y) \binom{n-j-|Y|}{n-i} \gamma^j.
\end{align*}
The chromatic polynomial is therefore
\begin{equation}\label{unfull}
\bar\chi_{\fG\pdot\Gamma^{(X)}}(\lambda) = \sum_{i=0}^n \lambda^{n-i}  \sum_{j=0}^{i} \gamma^j \sum_{\substack{Y \subseteq X^c:\, \text{stable}\\ |Y|\leq i-j}} (-1)^{|Y|} \binom{n-j-|Y|}{n-i} \bw_j(\Gamma\setm Y).
\end{equation}
As the coefficient of $\lambda^{n-i}$ is $\bw_i(\fG\pdot\Gamma^{(X)})$, we obtain \eqref{whits} by introducing $k:=|Y|\leq i-j$.
This quantity is a polynomial in $\gamma$ of degree at most $i$.  The term of degree $i$ arises only from $Y=\eset$ and its coefficient is $\bw_i(\Gamma)$, which is nonzero as long as $i\leq n-c$.  It remains to find the degree when $i>n-c$.

Let $Z$ be the set of isolated vertices that are not in $X$, so $\zeta:=|Z|$.  From the product formula \eqref{sproduct} along with \eqref{noedges} we find that
$\bar\chi_{\fG\pdot\Gamma^{(X)}}(\lambda) = \lambda^\zeta \bar\chi_{\fG\pdot\Gamma^{(X)}\setm Z}(\lambda)
\text{ and }
\bar\chi_\Gamma(\lambda) = \lambda^\zeta \bar\chi_{\Gamma\setm Z}(\lambda);$
we conclude that $\bw_i(\fG\pdot\Gamma^{(X)}) = 0$ if $i>n-\zeta$. 
(This corresponds to the fact that $\rk\bfF(\fG\pdot\Gamma^{(X)}) = n-\zeta$ if $\gamma>1$ and is at most that if $\gamma=1$; see \cite[Section 2]{BG2}.)

Thus, now we assume $n-c \leq i \leq n-\zeta$.  The highest-degree term in $\bw_i(\fG\pdot\Gamma^{(X)})$ has degree at most $n-c$ because $\bw_j(\Gamma\setm Y) = 0$ when $j>n-c$.  We examine the coefficient of $\gamma^{n-c}$, i.e.,
\begin{equation}\label{topcoeff0}
\sum_{\substack{Y \subseteq X^c, \text{ stable}\\ |Y|\leq i-(n-c)}} (-1)^{|Y|} \bw_{n-c}(\Gamma\setm Y) \binom{c-|Y|}{n-i}.
\end{equation}  
Let $c_Y$ denote the number of components of $\Gamma \setm Y$.  The rank of $\bfF(\Gamma \setm Y)$ is $n-|Y|-c_Y$, so if $n-|Y|-c_Y < n-c$, then $\bw_{n-c}(\Gamma\setm Y)=0$ and the term of  $Y$ in \eqref{topcoeff0} drops out.  We may therefore restrict $Y$ to satisfy $|Y|+c_Y \leq c$.  Restating this as $c(\Gamma\setm Y) \leq c-|Y|$ shows that deleting $Y$ destroys at least $|Y|$ components, which is possible only if $Y$ consists of isolated vertices in $\Gamma$.  
As $Y\subseteq X^c$, this means $Y \subseteq Z$; indeed, $Y$ is any subset of $Z$ of the right size.  It also means that $\bw_j(\Gamma\setm Y) = \bw_j(\Gamma)$ since the Whitney numbers do not take account of isolated vertices.

We digress to prove an identity using Vandermonde convolution, assuming $c \geq m \geq 0$:
\begin{equation}\label{identity}
\begin{aligned}
\sum_{k=0}^{m} (-1)^k \binom{\zeta}{k} \binom{c-k}{c-m} 
&=\sum_{k=0}^{m} (-1)^k \binom{\zeta}{k} \binom{c-k}{m-k} 
= (-1)^m \sum_{k=0}^{m} \binom{\zeta}{k} \binom{-(c-m+1)}{m-k} 
\\&= (-1)^m \binom{\zeta-c+m-1}{m-k} 
= \binom{c-\zeta}{m}.
\end{aligned}
\end{equation}

Formula \eqref{topcoeff0} becomes
\begin{equation}\label{topcoeff}
\begin{aligned}
\bw_{n-c}(\Gamma) \sum_{\substack{Y \subseteq Z: |Y|\leq i-(n-c)}} (-1)^{|Y|} \binom{c-|Y|}{n-i} 
&= \bw_{n-c}(\Gamma) \sum_{k=0}^{i-(n-c)} (-1)^k \binom{\zeta}{k} \binom{c-k}{n-i} .
\\&= \bw_{n-c}(\Gamma)\binom{c-\zeta}{i-(n-c)}\end{aligned}
\end{equation} 
by \eqref{identity} with $m=i-(n-c)$ (so $n-i = c-m$).  Note that $c-\zeta \geq i-(n-c)$ because we assumed $i \leq n-\zeta$; therefore this coefficient is positive.  
Thus, $\bw_i(\fG\pdot\Gamma^{(X)})$ is a polynomial in $\gamma$ of degree exactly $n-c$ when $n-c < i \leq n-\zeta$.
\end{proof}

\section{Examples}

\subsection{Lower Whitney numbers}\

Here are formulas for the lower Whitney numbers in terms of the structure of $\Gamma$, from \eqref{whits}.  
The complement of the simple graph $\Gamma$ is $\Gamma^c$.  The degree in $\Gamma$ of a vertex $v$ is $d_\Gamma(v)$.  The number of triangles in $\Gamma$ is $t(\Gamma)$; note that $\bw_2(\Gamma) = \binom{|E(\Gamma)|}{2} - t(\Gamma)$.
\begin{equation}\label{lower}
\begin{aligned}
\bw_0(\fG\pdot\Gamma\full) &= 1,
\\
\bw_1(\fG\pdot\Gamma\full) &= |E(\Gamma)| \gamma + |X|,
\\
\bw_2(\fG\pdot\Gamma\full) &= 
\bigg[ \binom{|E(\Gamma)|}{2} - t(\Gamma) \bigg] \gamma^2 
+ \bigg[ (|X|-1)|E(\Gamma)| + \sum_{y \in X^c}  d_\Gamma(y) \bigg] \gamma 
\\&\qquad 
+ \bigg[ (n-1) \Big(|X| - \frac{n}{2}\Big) + |E(\Gamma^c\setm X)| \bigg]
\end{aligned}
\end{equation}

\subsection{Special coefficients}\

The constant term of $\bw_i(\fG\pdot\Gamma^{(X)})$, which is the value of the Whitney number when $\gamma=0$, is 
$$
\sum_{k=0}^{i} (-1)^k \binom{n-k}{i-k} \alpha_k(\Gamma \setm X).
$$
Suppose $\Gamma=K_n$, $n>1$.  Then $\zeta=0$.  There are no stable sets larger than a single vertex.  The constant term is
$$
\sum_{k=0}^1 (-1)^k \binom{n-k}{i-k} \alpha_k(K_n \setm X) = \binom{n-1}{n-i}|X| - (i-1)\binom{n}{i}.
$$
Note that $\binom{n-1}{-1} = 0$ and that by setting $\gamma=0$ we do not get the chromatic polynomial of a partially filled edgeless graph, $(K_n^c)^{(X)}$.

We state the two highest terms in $\gamma$ of $\bw_i(\fG\pdot\Gamma^{(X)})$, assuming $i\leq n-c$.  The highest power, $\gamma^i$, has coefficient $\bw_j(\Gamma)$.  The next lower coefficient is more interesting: it is
$$
(n+1-i) \bw_{i-1}(\Gamma) - \sum_{y \notin X} \bw_{i-1}(\Gamma\setm y).
$$

\subsection{A short path}

As a thorough example we develop the formulas for $\Gamma=P_2 = v_1v_2v_3$, the path of length 2.  We deduce the Whitney numbers from Equation \eqref{whits}.  The stable sets are $\eset$, $\{v_p\}$ for $p=1,2,3$, and $\{v_1,v_3\}$.  The Whitney numbers of $P_2$ are $\bw_0(P_2)=1$, $\bw_1(P_2)=2$, $\bw_2(P_2)=1$, $\bw_3(P_2)=0$.
\begin{align*}
\bw_i(\fG\pdot P_2^{(X)}) &
= \sum_{j=0}^{i} \gamma^j \sum_{k=0}^{i-j} (-1)^k \binom{3-j-k}{3-i} \sum_{\substack{Y \subseteq X^c:\, \text{stable}\\ |Y|=k}} \bw_j(P_2\setm Y).
\end{align*}  
We conclude that
\begin{equation}\label{P2}
\begin{aligned}
\bw_i(\fG\pdot P_2^{(X)}) 
&= \sum_{k=0}^2 (-1)^k \binom{3-k}{3-i} \alpha_k(P_2 \setm X)
\\&\quad
+ \gamma \sum_{k=0}^2 (-1)^k \binom{3-1-k}{3-i} \sum_{\substack{Y \subseteq X^c:\, \text{stable}\\ |Y|=k}} \bw_1(P_2\setm Y)
\\&\quad
+ \gamma^2 \sum_{k=0}^2 (-1)^k \binom{3-2-k}{3-i} \sum_{\substack{Y \subseteq X^c:\, \text{stable}\\ |Y|=k}} \bw_2(P_2\setm Y).
\end{aligned}
\end{equation}

There are six different possible sets $X$ up to isomorphism:
$$X = \eset, \{v_1\}, \{v_2\}, \{v_1,v_2\}, \{v_1,v_3\}, V.$$
We present the Whitney numbers in Table \ref{table:P2} and abbreviated calculations for two of the six cases.
\begin{table}[htp]
\begin{center}
\begin{tabular}{|c|c|r|r|r||c|c|}
\hline
$X$	&$\bw_0$	&$\bw_1$	&$\bw_2$	&$\bw_3$	&$\gamma=1$	&$\bfF(\triv\pdot P_2^{(X)})$
\\
	\hline
$\eset $	&$1$	&$2\gamma$	&$-2 + 2\gamma + \gamma^2$	&$-1 + \gamma^2$	&1, 2, 1, 0	&$\bfF(C_4)$	
\\
$\{v_1\}$	&$1$	&$1 + 2 \gamma$	&$-1 + 3 \gamma + \gamma^2$	&$-1 + \gamma + \gamma^2$	&1, 3, 3, 1	&$F_3$	
\\
$\{v_2\}$	&$1$	&$1 + 2 \gamma$	&$2 \gamma + \gamma^2$	&$\gamma^2$	&1, 3, 3, 1	&$F_3$	
\\
$\{v_1,v_2\}$ &$1$	&$2 + 2 \gamma$	&$1 + 3 \gamma + \gamma^2$	&$\gamma + \gamma^2$	&1, 4, 5, 2	&$\bfF(C_3) \oplus \bfF(K_2)$	
\\
$\{v_1,v_3\}$ &$1$	&$2 + 2 \gamma$	&$1 + 4 \gamma + \gamma^2$	&$2 \gamma + \gamma^2$	&1, 4, 6, 3	&$\bfF(C_4)$	
\\
$V$	&$1$	&$3 + 2 \gamma$	&$3 + 4 \gamma + \gamma^2$	&$1 + 2 \gamma + \gamma^2$	&1, 5, 8, 4	&$\bfF(K_4\setm e)$	
\\
\hline
\end{tabular}
\end{center}
\medskip
\caption{The Whitney numbers of the six expansions $\fG\pdot P_2^{(X)}$.  $F_3$ is the free matroid.  The fact that the formulas with $\gamma=1$ give the right Whitney numbers for the matroid of $\triv\pdot P_2^{(X)}$ in all six cases gives confidence that the formulas are correct.}
\label{table:P2}
\end{table}%

\case{2}  Consider $X=\{v_1\}$, so $X^c = \{v_2,v_3\}$ and $P_2 \setm X = K_2$.
\begin{align*}
\bw_i(\fG\pdot P_2^{(v_1)}) 
&= \sum_{k=0}^2 (-1)^k \binom{3-k}{3-i} \alpha_k(P_2 \setm v_1)
\\&\quad
+ \gamma \sum_{k=0}^2 (-1)^k \binom{3-1-k}{3-i} \sum_{\substack{Y \subseteq \{v_2,v_3\}:\, \text{stable}\\ |Y|=k}} \bw_1(P_2\setm Y)
\\&\quad
+ \gamma^2 \sum_{k=0}^2 (-1)^k \binom{3-2-k}{3-i} \sum_{\substack{Y \subseteq \{v_2,v_3\}:\, \text{stable}\\ |Y|=k}} \bw_2(P_2\setm Y)
\\
&= \bigg[ \binom{3}{3-i} - \binom{2}{3-i} 2  \bigg]
+ \gamma \bigg[ \binom{2}{3-i} 2 - \binom{1}{3-i} \bigg]
+ \gamma^2 \bigg[ \binom{1}{3-i} \bigg].
\end{align*}  

\case{6}  Here $X = V$ so $X^c = \eset$.  We know the chromatic polynomial from \eqref{sfull}.  This enables us to check the formula \eqref{P2} for arbitrary values of $\gamma$.
\begin{align*}
\bw_i(\fG\pdot P_2\full) 
&= \bigg[ \binom{3}{3-i} \bigg]
+ \gamma \bigg[ \binom{2}{3-i} 2 \bigg] 
+ \gamma^2 \bigg[ \binom{1}{3-i} \bigg].
\end{align*}  
The Whitney numbers (Table \ref{table:P2}) should agree with 
\begin{align*}
\chi_{\fG\pdot K_3\full}(\lambda) &= \gamma^3 \frac{\lambda-1}{\gamma} \Big( \frac{\lambda-1}{\gamma} - 1 \Big)^2 
= (\lambda-1)(\lambda-[1+\gamma])^2 
\\&= \lambda^3 - \lambda^2 \Big[ 3+2\gamma \Big] + \lambda \Big[ 1+2\gamma+\gamma^2+2(1+\gamma) \Big] - \Big[ 1+2\gamma+\gamma^2 \Big]. 
\end{align*}  
from \eqref{sfull}, and they do.

\subsection{Dowling Examples}

\begin{example}[Dowlings]\label{X:dowl}
Equation \eqref{full} immediately gives the Whitney numbers of the Dowling lattices.  The base graph chromatic polynomial is $\chi_{K_n}(\lambda) = \sum_{j=0}^n s(n,n-j)\lambda^{n-j}$, where $s(n,n-j)$ is the Stirling number of the first kind.  Its sign is $(-1)^{n-j}$ if  $j<n$; $s(n,0)$ is 0 if $n>0$.  Write $\bs(n,n-j) = (-1)^{n-j} s(n,n-j)$ for the signless Stirling number.  Thus, $\bw_j(K_n)=\bs(n,n-j)$ and for the Dowling lattice we have the polynomial formula
$$
\bw_i(Q_n(\fG)) = \bw_i(\fG\pdot K_n\full) 
= \sum_{0\leq j \leq i}  \bs(n,n-j) \binom{n-j}{i-j}  \gamma^j.
$$
\end{example}

\begin{example}[More or Less Jointless Dowlings]\label{X:jtlessdowl}
These are the lattices of $\bfF(\fG\pdot K_n^{(X)})$ for any vertex set $X$, or in Kung's terminology any set of joints.  Kung introduced the term \emph{joint} for the half edges in a group expansion, when viewed as elements of a standard basis for the matroid \cite{VCG}.  If all joints are missing, he calls the lattice \emph{jointless}.  

For $K_n$ with $n>0$, we have $c=1$ and the only stable sets $Y$ are $\eset$ and $\{y\}$ for $y\in X^c$.
Equation \eqref{whits} becomes
\begin{align*}
\bw_i(\fG\pdot K_n^{(X)}) = &\sum_{j=0}^{i} \left[ \bs(n,n-j) \binom{n-j}{i-j} - |X^c| \bs(n-1,n-j-1) \binom{n-j-1}{i-j-1} \right] \gamma^j.
\end{align*}

In particular, for $\gamma=1$ this should give $\bs(n,n-i) = \bw_i(K_n)$.  Thus, we have an identity involving Stirling numbers:
\begin{align*}
\sum_{j=0}^{i} \left[ \bs(n,n-j) \binom{n-j}{i-j} - n \bs(n-1,n-j-1) \binom{n-j-1}{i-j-1} \right] = \bs(n,n-i).
\end{align*}
\end{example}

\section{Biased expansions}

An $\gamma$-fold biased expansion of $\Gamma$ \cite[Example 3.8]{BG3}, where $\gamma$ is a positive integer, is a combinatorial abstraction and generalization of a group expansion.  It has the same numerical properties as a group expansion \cite[Example 6.8]{BG3} but does not require a group (although it is true that if $\Gamma$ is 3-connected, every biased expansion is a group expansion \cite{AMQ}).  That is, all the preceding results apply equally to biased expansions.



\begin{thebibliography}{9}

\bibitem{CGL} T.A.\ Dowling, 
A class of geometric lattices based on finite groups. 
\emph{J.\ Combin.\ Theory Ser.\ B} {\bf 14} (1973), 61--86.  
Erratum. 
\emph{ibid}.\ {\bf 15} (1973), 211.  

\bibitem{VCG} Jeff Kahn and Joseph P.S.\ Kung,
Varieties of combinatorial geometries.  
\emph{Trans.\ Amer.\ Math.\ Soc.}\ {\bf 271} (1982), 485--499.  

\bibitem{Rav} Alberto Ravagnani, 
Whitney numbers of combinatorial geometries and higher-weight Dowling lattices.  
\emph{SIAM J.\ Appl.\ Algebra Geom.}\ 6 (2022), no.\ 2, 156--189.

\bibitem{FCT} Gian-Carlo Rota,
On the foundations of combinatorial theory:  I.\ Theory of M{\"o}bius functions.
\emph{Z.\ Wahrsch.\ verw.\ Gebiete} {\bf 2} (1964), 340--368.

\bibitem{WhLogical} H.\ Whitney,
A logical expansion in mathematics. 
\emph{Bull.\ Amer.\ Math.\ Soc.}\ {\bf 38} (1932), 572--579.

\bibitem{BG2} Thomas Zaslavsky,
Biased graphs.  II.\  The three matroids.  
\emph{J.\ Combin.\ Theory Ser.\ B} {\bf 51} (1991), 46--72.  

\bibitem{BG3} Thomas Zaslavsky,
Biased graphs.  III.\  Chromatic and dichromatic invariants.  
\emph{J.\ Combin.\ Theory Ser.\ B} {\bf 64} (1995), 17--88.

\bibitem{AMQ} Thomas Zaslavsky, 
Associativity in multiary quasigroups:  The way of biased expansions.
\emph{Aequat.\ Math.}\ {\bf 83} (2012), no.\ 1, 1--66.

\end{thebibliography}
\end{document}